%
%
%

\documentclass[graybox]{svmult}


\usepackage{mathptmx}       
\usepackage{helvet}         
\usepackage{courier}        
\usepackage{type1cm}        
%
\usepackage{makeidx}         
\usepackage{graphicx}        
\usepackage{multicol}        
\usepackage[bottom]{footmisc}

\makeindex             


\begin{document}

\title*{Pythagoras at the Bat}
\author{Steven J Miller, Taylor Corcoran, Jennifer Gossels, Victor Luo and Jaclyn Porfilio}
\institute{Steven J Miller \at Williams College, Williamstown, MA 01267, \email{sjm1@williams.edu, Steven.Miller.MC.96@aya.yale.edu}
\and Taylor Corcoran \at The University of Arizona, Tucson, AZ 85721
\and Jennifer Gossels \at Princeton University, Princeton, NJ 08544
\and Victor Luo \at Williams College, Williamstown, MA 01267
\and Jaclyn Porfilio \at Williams College, Williamstown, MA 01267}
%
%
\maketitle

\abstract*{} The Pythagorean formula is one of the most popular ways to measure the true ability of a team. It is very easy to use, estimating a team's winning percentage from the runs they score and allow. This data is readily available on standings pages; no computationally intensive simulations are needed. Normally accurate to within a few games per season, it allows teams to determine how much a run is worth in different situations. This determination helps solve some of the most important economic decisions a team faces: How much is a player worth, which players should be pursued, and how much should they be offered. We discuss the formula and these applications in detail, and provide a theoretical justification, both for the formula as well as simpler linear estimators of a team's winning percentage. The calculations and modeling are discussed in detail, and when possible multiple proofs are given. We analyze the 2012 season in detail, and see that the data for that and other recent years support our modeling conjectures. We conclude with a discussion of work in progress to generalize the formula and increase its predictive power  \emph{without} needing expensive simulations, though at the cost of requiring play-by-play data.


\newcommand{\ncr}[2]{{#1 \choose #2}}
\newcommand{\rl}{{\rm R}_{\rm ave}}
\newcommand{\rt}{{\rm R}_{\rm total}}
\newcommand{\lwp}{\rm WP}
\newcommand{\rs}{{\rm RS}}
\newcommand{\ra}{{\rm RA}}
\newcommand{\rso}{{\rm RS_{\rm obs}}}
\newcommand{\rao}{{\rm RA_{\rm obs}}}

\section{Introduction}\label{sec:pythagintro}

In the classic movie \emph{Other People's Money}, New England Wire and Cable is a firm whose parts are worth more than the whole. Danny Devito's character, Larry the Liquidator, recognizes this and tries to take over the company, with the intent on breaking it up and selling it piecemeal. Gregory Peck plays Jorgy, the owner of the firm, who gives an impassioned defense to the stockholders at a proxy battle about traditional values and the golden days ahead. In the climatic conclusion, Larry the Liquidator responds to Jorgy's speech which painted him a heartless predator who builds nothing and cares for no one but himself. Larry says

\begin{quote} Who cares? I'll tell you. Me. I'm not your best friend. I'm your only friend. I don't make anything? I'm making you money. And lest we forget, that's the only reason any of you became stockholders in the first place. You want to make money! You don't care if they manufacture wire and cable, fried chicken, or grow tangerines! You want to make money! I'm the only friend you've got. I'm making you money. \end{quote}

While his speech is significantly longer than this snippet, the scene in general and the lines above in particular highlight one of the most important problems in baseball, one which is easily forgotten. In the twenty-first century massive computation is possible. Data is available in greater quantities than ever before; it can be analyzed, manipulated, and analyzed again thousands of times a second. We can search for small connections between unlikely events. This is especially true in baseball, as there has been an explosion of statistics that are studied and quoted, both among the experts and practitioners as well as the everyday fan. The traditional metrics are falling out of favor, being replaced by a veritable alphabet soup of acronyms. There are so many statistics now, and so many possibilities to analyze, that good metrics are drowned out in poor ones. We need to determine which ones matter most.

In this chapter we assume a team's goal is to win as many games as possible given a specified amount of money to spend on players and related items. This is a reasonable assumption from the point of view of general managers, though it may not be the owner's goal (which could range from winning at all costs to creating the most profitable team). In this case, Devito's character has very valuable advice: The goal is to win games. We don't care if it's by winning shoot-outs 12-10 in thirteen innings, or by eking out a win in a 1-0 pitcher's duel. We want to win games.

In this light, we see that sabermetrics is a dear friend. While there are many items we could study, we focus on the value of a run (both a run created and a run saved). We have a two-stage process. We need to determine how much each event is worth in terms of creating a run, and then we need to extract how much a run is worth. Obviously these are not constant values; a run is worth far more in a 2-1 game than in a 10-1 match. We focus below entirely on the value of a run. We thus completely ignore the first item above, namely how much each event contributes to scoring.

Our metric for determining the worth of a run is Bill James' Pythagorean Won-Loss formula: If a team scores $\rs$ runs while allowing $\ra$, then their winning percentage is approximately $\frac{\rs^\gamma}{\rs^\gamma + \ra^\gamma}$. Here $\gamma$ is an exponent whose value can vary from sport to sport (as well as from era to era within that sport). James initially took $\gamma$ to be 2, which is the source of the name as the formula is reminiscent of the sum of squares from the Pythagorean theorem. \emph{Note that instead of using the total runs scored and allowed we could use the average number per game, as such a change rescales the numerator and the denominator by the same amount.}

In this chapter we discuss previous work providing a theoretical justification for this formula, talk about future generalizations, and describe its implications in one of the most important economics problems confronted by a baseball team: How much is a given player worth? While much of this chapter has appeared in journals, we hope that by combining everything in one place and doing the calculations in full detail and in as elementary a way as possible that we will increase the visibility of this method, and provide support for the role of mathematical modeling in sabermetrics.

Before delving into the derivation, it's worth remarking on why such a derivation is important, and what it can teach us. In \emph{An Enquiry Concerning Human Understanding} (1772), David Hume wrote:

\begin{quote} The contrary of every matter of fact is still possible, because it can never imply a contradiction, and is conceived by the mind with the same facility and distinctness, as if ever so conformable to reality. That the sun will not rise tomorrow is no less intelligible a proposition, and implies no more contradiction, than the affirmation, that it will rise. We should in vain, therefore, attempt to demonstrate its falsehood. Were it demonstratively false, it would imply a contradiction, and could never be distinctly conceived by the mind. \end{quote}

Hume's warning complements our earlier quote, and can be summarized by saying that just because the sun rose yesterday we cannot conclude that it will rise today. Sabermetricians frequently find quantities that appear to be well correlated with desirable outcomes; however, there is a real danger that the correlation will not persist in the future as past performance is no guarantee of future performance. (This lesson has been painfully learned by many chartists on Wall Street.) Thus we must be careful in making decisions based on regressions and other calculations. If we find a relationship, we want some \emph{reason} to believe it will continue to hold.

We are therefore led to creating mathematical models with reasonable assumptions; thus the point of this chapter is to develop predictive mathematical models to complement inferential techniques. The advantage of this approach is that we now have a reason to believe the observed pattern will continue, as we can now point to an explanation, a reason. We will find such a model for baseball, which has the Pythagorean formula, initially a numerical observation by James that seemed to do a good job year after year, as a consequence.

The Pythagorean formula has a rich history; almost any sabermetrics book references it at some point. It is necessary to limit our discussion to just some of its aspects. As the economic consequences to a team from better predictive power are clear, we concentrate on the mathematical issues. Thus explaining how mathematical models can lead to closed-form expressions, which can solve real world problems, is our main goal. We begin in \S\ref{sec:pythagtheory} with some general comments on the statistic. We describe a reasonable mathematical model in the next section, show the Pythagorean formula is a consequence, and then give a mathematical proof in \S\ref{sec:pythagproof}. In \S\ref{sec:pythagapplication} we examine some consequences, in particular how much a run created or saved is worth at different production levels, and in \S\ref{sec:pythagverification} we analyze data from several seasons to see how well our model and the formula do. Next we examine linear predictors for a team's winning percentage, and show how they follow from linearizing the Pythagorean formula. We end by discussing current, ongoing research into generalizing the Pythagorean formula.


\section{General Comments}\label{sec:pythagtheory}

Before discussing why the Pythagorean formula should be true, it's worth commenting on the form it has, both in its present state and its debut back in Bill James' \emph{1981 Baseball Abstract} [6]. Remember it says that a team's winning percentage should be $\frac{\rs^\gamma}{\rs^\gamma + \ra^\gamma}$, with $\gamma$ initially taken as 2 but now typically taken to be around 1.83. One is struck by how easy the formula is to state and to use, especially in the original incarnation. All we need is to know the average number of runs scored and allowed, and the ratio can be found on a simple calculator.

Of course, back in the '80s this wouldn't be entirely true for someone watching at home if $\gamma$ were not 2, though the additional algebra is slight and not even noticeable on modern calculators, computers and even phones. One of the great values of this statistic is just how easy it is to calculate, which is one of the reasons for its popularity. You can easily approximate how much better you would do if you scored 10 more runs, or allowed 10 fewer, which we do later in Figure \ref{fig:pythaggainsave}. We can do this as we have a \emph{simple, closed form expression} for our winning percentage in terms of just three parameters: average runs scored, average runs allowed, and an exponent $\gamma$.

This is very different than the multitude of Monte Carlo simulations which try to predict a team's record. These require detailed statistics on batters and pitchers and their interactions. Depending on how good and involved the algorithm is, we may need everything from how many pitches a batter sees per appearance to the likelihood of a runner advancing from first to third on a single hit to right field. While this data is available, it takes time to simulate thousands of games. Further, every small change in a team requires an entirely new batch of simulations. With the Pythagorean formula, we can immediately determine the impact of a player \emph{if} we have a good measure of how many runs they will contribute or save.

Of course, as with most things in life there are trade-offs. While the closed-form nature of the Pythagorean formula allows us to readily measure the impact of players, it indicates a major defect that should be addressed. Baseball is a complicated game; it is unlikely that all the subtleties and issues can be distilled into one simple formula involving just three inputs. Admittedly, it is a major challenge to derive a good formula to predict how many runs a player will give a team, and we are ignoring this issue in this chapter; however, it is improbable that any formula as simple as this can capture everything that matters. There are thus several extensions of the Pythagorean formula; we discuss some of these in Sections \S\ref{sec:pythaglinearization} and \S\ref{sec:pythaggeneralization}, as well as outline a program currently being pursued to improve its predictive power.

\section{Pythagorean Formula: Model}\label{sec:pythagmodel}

There are many ways to model a baseball game. The more sophisticated the model, the more features can be captured, though added power comes at a cost. The cost varies from increased run-time to requiring massively more data. We give a very simple model for a baseball game, and show the Pythagorean formula is a consequence. Of course, the simplicity of our model strongly suggests that it cannot be the full story. We return to that issue in \S\ref{sec:pythaggeneralization}, and content ourselves here with the simple case. The hope is that this simplified model of baseball is nevertheless powerful enough to capture the main features and yield a reasonably good predictive statistic. See the paper of Hammond, Johnson and Miller [4] for other approaches to modeling baseball games and winning percentages. Specifically, they look at James' log5 method, which also appeared in his 1981 abstract [6]. There he estimates the probability a team with winning percentage $a$ beats a team with winning percentage $b$ by $\frac{a(1-b)}{a(1-b)+(1-a)b}$. Interestingly, the Pythagorean formula with exponent 2 follows by taking $a = {\rm RS}/({\rm RS} + {\rm RA})$ and $a = {\rm RA}/({\rm RS} + {\rm RA})$, with ${\rm RS}$ the average number of runs scored and ${\rm RA}$ the average number of runs allowed.

The following model and derivation first appeared in work by the first author in [9], who introduced using a Weibull distribution to model run production. The Weibull distribution is extensively used in statistics, arising in many problems in survival analysis (see [12] for a good description of the Weibull's properties and applications). The reason a Weibull distribution is able to model well so many different data sets is that it is a three parameter distribution, with probability density function  \begin{equation} f(x;\alpha,\beta,\gamma) \ = \  \frac{\gamma}{\alpha}\ ((x-\beta)/\alpha)^{\gamma-1}\ e^{- ((x-\beta)/\alpha)^{\gamma}} \end{equation} if $x \ge \beta$ and 0 otherwise. Here $\alpha$, $\beta$ and $\gamma$ are the three parameters of the distribution. The effect of $\beta$ is to shift the entire distribution along the real line; essentially it determines the starting point. In our investigations $\beta$ will always be $-1/2$, for reasons that will become clear. Next is $\alpha$, which adjusts the scale of the distribution but not the shape; as $\alpha$ increases the distribution becomes more spread out.

The reason that $\alpha$ and $\beta$ do not alter the shape of the distribution is that, for any distribution with finite mean and variance, we can always rescale it to have mean zero and variance 1 (or, more generally, any mean and any positive variance). Thus all $\alpha$ and $\beta$ do are adjust these two quantities. It is $\gamma$ that is the most important, as different values of $\gamma$ lead to very different shapes. We illustrate this in Figure \ref{fig:pythagweibullgamma1to4}. For definiteness, we may rescale and assume $\alpha = 1$ and $\beta = 0$; we see how the distribution changes as $\gamma$ ranges from 1 to 2.

\begin{figure}[h]
\begin{center}
\scalebox{.9125}{\includegraphics{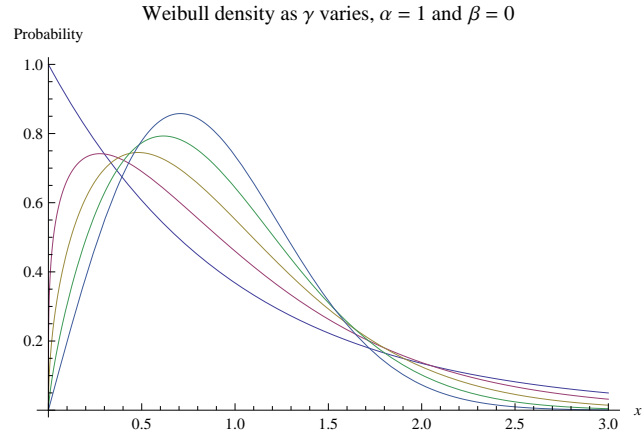}}
\caption{\label{fig:pythagweibullgamma1to4} The changing probabilities of a family of Weibulls with $\alpha = 1$, $\beta = 0$, and $\gamma \in \{1, 1.25, 1.5, 1.75, 2\}$; $\gamma = 1$ corresponds to the exponential distribution, and increasing $\gamma$ results in the bump moving rightward.}
\end{center}
\end{figure}



We are now ready to state our model. After listing our assumptions we discuss why these choices were made, and their reasonableness. \emph{Remember, as remarked earlier, that in the Pythagorean formula it makes no difference if we use the total runs or the average per game, as rescaling changes the numerator and the denominator by the same multiplicative factor, and hence has no effect.}

\begin{svgraybox}\textbf{Assumptions for modeling a baseball game:} The average number of runs a team scores per game, denoted $\rs$, and the average number of runs allowed per game, denoted $\ra$, are random variables drawn independently from Weibull distributions with $\beta = -1/2$ and the same $\gamma$.
\end{svgraybox}

These assumptions clearly require discussion, as they cannot be right. The first issue is that we are modeling runs scored and allowed by continuous random variables and not discrete random variables. While earlier work in the field used discrete random variables (especially geometric or Poisson), the difficulty with these approaches is that it is hard to obtain tractable, closed form expressions for the probability a team scores more runs than it allows and hence wins a game. The reason is that calculus is unavailable in this case. Another way to put it is that while many people have continued in mathematics to Calculus III or IV, no one goes similarly far in classes on summation. In general, we do not have good formulas for sums, but through calculus we do have nice expressions for integrals. While the model allows for the Red Sox to beat the Yankees $\pi$ to $e$, we must accept this if we want to be able to use calculus.

The next assumption is that these random variables are drawn from Weibull distributions. There are two reasons for this. One is that the Weibull distributions, due to their shape parameter $\gamma$, are an extremely flexible family and are capable of fitting many one-hump distributions (i.e., distributions that go up and then go down). The second, and far more important, is that calculations with the Weibull are exceptionally tractable and lead to closed form expressions. This should be compared to similar and earlier work of Hein Hundel [5], which the author learned of from the Wikipedia entry \emph{Pythagorean expectation} [11]. In particular, the mean $\mu_{\alpha,\beta,\gamma}$ and the variance $\sigma^2_{\alpha,\beta,\gamma}$ of the Weibull are readily computed: \begin{eqnarray} \mu_{\alpha,\beta,\gamma} &\ =\ & \alpha \Gamma\left(1+\gamma^{-1}\right) + \beta \nonumber\\ \sigma^2_{\alpha,\beta,\gamma}&\ =\ & \alpha^2 \Gamma\left(1+2\gamma^{-1}\right)  - \alpha^2 \Gamma\left(1+\gamma^{-1}\right)^2.\end{eqnarray} Here $\Gamma(s)$ is the Gamma function, defined for the real part of $s$ positive by \begin{equation} \Gamma(s)\ =\ \int_0^\infty e^{-u} u^{s-1} {\rm d} u. \end{equation} The Gamma function is the continuous generalization of the factorial function, as for $n$ a non-negative integer we have $\Gamma(n+1) = n!$.

The reason Weibulls lead to such tractable calculations is that if $X$ is a random variable drawn from a Weibull with parameters $\alpha, \beta$ and $\gamma$, then $X^{1/\gamma}$ is exponentially distributed with parameter $\alpha^\gamma$. Therefore a simple change of variables leads to simple integrals of exponentials, which can be done in closed form. Due to the importance of this calculation, we give full details for the computation of the mean in Appendix \ref{sec:pythagmeanweibull} (a similar calculation determines the variance). The point is that when there are several alternatives to use, certain choices are more tractable and should be incorporated. We discuss how to handle more general distributions while preserving the all-important closed form nature of the solution in \S\ref{sec:pythaggeneralization}.

The next issue is our assumption that $\beta = -1/2$. This choice is to facilitate comparisons to the discrete scoring in baseball. Using the above calculations for the mean, if $\beta$ and $\gamma$ are fixed we can determine $\alpha$ so that the mean of our Weibull matches the observed average runs scored (or allowed) per game. We can use the Method of Least Squares or the Method of Maximum Likelihood to find the best fit parameters $\alpha, \beta, \gamma$ to the observed data. In doing so, we need to deal with the fact that our data is discrete. By taking $\beta = -1/2$, we are breaking the data into bins $[-\frac12, \frac12)$, $[\frac12, 1\frac12)$, $[1\frac12, 2\frac12)$ and so on. Notice that the \emph{centers} of these bins are, respectively, 0, 1, 2, $\dots$. This is no accident, and in fact is the reason we chose $\beta$ as we did. By taking $\beta =-1/2$ the possible integer scores are in the \emph{middle} of each bin. If we took $\beta = 0$, as might seem more natural, then these values would lie at the endpoints of the bins, which would cause issues in determining the best fit values.

The final issue is that we are assuming runs scored and runs allowed are independent. This of course cannot be true, for the very simple reason that baseball games cannot end in a tie! Thus if we know the Orioles scored 5 runs against the Red Sox, then we know the Sox ended the game with some number other than 5. There are a plethora of other obvious issues with this assumption, ranging from if you have a large lead late in the game you might rest your better players and take a chance on a weaker pitcher, to bringing in your closer to protect the lead in a tight game. That said, an analysis of the data shows that on average these issues cancel each other out, and that subject to being different the runs scored and allowed behave as if they are statistically independent. The interesting feature here is that we cannot use a standard $r \times c$ contingency table analysis as these two values cannot be equal. This leads to an iterative procedure taking into account these \emph{structural zeros} (values of the table that are inaccessible), which is described in Appendix \ref{sec:pythagappendixindstructuralzeros}.

We end this section by describing the calculation that yields the Pythagorean formula, and remarking on why we have chosen to model the runs with Weibull distributions. Let $X$ be a random variable drawn from a Weibull with parameters $\alpha_{\rs}, \beta = -1/2$ and $\gamma$, representing the number of runs a team scores on average. Similarly, let $Y$ be a random variable drawn from a Weibull with parameters $\alpha_{\ra}, \beta = -1/2$ and $\gamma$, representing the number of runs a team allows on average. Notice we have the same $\gamma$ for $X$ and $Y$, and we choose $\alpha_{\rs}$ and $\alpha_{\ra}$ so that the mean of $X$ is the observed average number of runs scored per game, $\rs$, and the mean of $Y$ is the observed average number of runs allowed per game, $\ra$. Thus \begin{eqnarray}\label{eq:genweibmeans} \alpha_\rs & \ = \ & \frac{\rs-\beta}{\Gamma(1+\gamma^{-1})}, \ \ \ \ \ \ \alpha_\ra  \ = \  \frac{\ra-\beta}{\Gamma(1+\gamma^{-1})}.\end{eqnarray} To determine our team's winning percentage we just need to calculate the probability that $X$ exceeds $Y$: \begin{equation} \mbox{Prob}(X > Y) \ = \ \int_{x=\beta}^\infty \int_{y=\beta}^x f(x;\alpha_\rs,\beta,\gamma) f(y;\alpha_\ra,\beta,\gamma) {\rm d} y\; {\rm d} x. \end{equation}

For general probability densities $f$ the above double integral is intractable (as can be seen in Hundel's work, where he used the log-normal distribution). As we'll see in the next section, the Weibull distribution leads to very simple integrals which can be evaluated in closed form. This is not am accidental, fortuitous coincidence. When first investigating this problem, Miller began by choosing $f$'s that led to nice double integrals which could be computed in closed form; thus the choice of the Weibull came not from looking at the data but from looking at the integration! The first $f$ Miller chose was an exponential distribution, which turns out to be a Weibull with $\gamma = 1$. Next, Miller chose a Rayleigh distribution, which is a Weibull with $\gamma = 2$. (As a number theorist working in random matrix theory, which is often used to model the energy levels of heavy nuclei, the Rayleigh distribution was one Miller encountered frequently in his research and reading, as it approximates the spacings between energy levels of heavy nuclei.) It was only after computing the answer in both these cases that Miller realized the two densities fit into a nice family, and did the calculation for general $\gamma$.


\section{Pythagorean Formula: Proof}\label{sec:pythagproof}

We now finally prove the Pythagorean formula, which we first state explicitly as a theorem. For completeness, we restate our assumptions.

\begin{theorem}[Pythagorean Won-Loss Formula]\label{thm:pythmweibull}
Let the runs scored and runs allowed per game be two independent random variables drawn from Weibull
distributions with parameters $(\alpha_\rs,\beta,\gamma)$ and $(\alpha_\ra,\beta,\gamma)$ respectively, where $\alpha_\rs$ and $\alpha_\ra$ are
chosen so that the means are $\rs$ and $\ra$; in applications $\beta = -1/2$. Then
\begin{equation}\label{eq:pythagammap} \mbox{\rm Won-Loss
Percentage}(\rs,\ra,\beta,\gamma) \ = \ \frac{(\rs-\beta)^\gamma}{(\rs-\beta)^\gamma
+ (\ra-\beta)^\gamma}. \end{equation}
\end{theorem}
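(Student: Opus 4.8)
The plan is to compute the winning percentage directly as $\mbox{Prob}(X > Y)$, where $X \sim$ Weibull$(\alpha_\rs,\beta,\gamma)$ and $Y \sim$ Weibull$(\alpha_\ra,\beta,\gamma)$, and to exploit the two structural gifts of the Weibull: a clean closed-form cumulative distribution function, and the fact (noted in the model section) that raising the shifted variable to the $\gamma$ power produces an exponential. Independence is what lets me write the joint density of $(X,Y)$ as the product $f(x;\alpha_\rs,\beta,\gamma)\,f(y;\alpha_\ra,\beta,\gamma)$ inside the double integral in the first place, so I would flag that at the outset. The first concrete step is to perform the inner $y$-integral, which is just the CDF of $Y$: $\int_{\beta}^{x} f(y;\alpha_\ra,\beta,\gamma)\,\mathrm{d}y = 1 - e^{-((x-\beta)/\alpha_\ra)^{\gamma}}$. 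This collapses the double integral into a single integral in $x$ and already does half the work.

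Next I would substitute $t = x-\beta$ to recenter the support at $0$, and then split the integrand using the bracket $\bigl[1 - e^{-(t/\alpha_\ra)^{\gamma}}\bigr]$ into two pieces. The first piece is exactly $\int_{0}^{\infty} \tfrac{\gamma}{\alpha_\rs}(t/\alpha_\rs)^{\gamma-1} e^{-(t/\alpha_\rs)^{\gamma}}\,\mathrm{d}t$, the integral of the full Weibull density over its support, hence equal to $1$. The second piece carries both exponentials, $\int_{0}^{\infty} \tfrac{\gamma}{\alpha_\rs}(t/\alpha_\rs)^{\gamma-1} e^{-(t/\alpha_\rs)^{\gamma} - (t/\alpha_\ra)^{\gamma}}\,\mathrm{d}t$. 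The crucial move — and the one that makes the whole Weibull choice pay off — is the substitution $w = t^{\gamma}$, so that $\gamma t^{\gamma-1}\,\mathrm{d}t = \mathrm{d}w$ and, more importantly, \emph{both} exponents become linear in $w$. The two exponentials then fuse into a single $e^{-w(\alpha_\rs^{-\gamma} + \alpha_\ra^{-\gamma})}$, whose integral is elementary and evaluates to $\alpha_\ra^{\gamma}/(\alpha_\rs^{\gamma} + \alpha_\ra^{\gamma})$. Subtracting from the first piece gives $\mbox{Prob}(X > Y) = \alpha_\rs^{\gamma}/(\alpha_\rs^{\gamma} + \alpha_\ra^{\gamma})$.

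Finally I would translate back from the scale parameters to the observed means via the relations in (\ref{eq:genweibmeans}): since $\alpha_\rs^{\gamma} = (\rs-\beta)^{\gamma}/\Gamma(1+\gamma^{-1})^{\gamma}$ and likewise for $\alpha_\ra$, the common factor $\Gamma(1+\gamma^{-1})^{\gamma}$ cancels between numerator and denominator, leaving precisely $\frac{(\rs-\beta)^{\gamma}}{(\rs-\beta)^{\gamma} + (\ra-\beta)^{\gamma}}$, which is (\ref{eq:pythagammap}). I expect no genuine difficulty here: the only real idea is recognizing the $w = t^{\gamma}$ substitution that linearizes the exponents, and the one place to be careful is bookkeeping — tracking the $\alpha_\rs$ versus $\alpha_\ra$ factors through the split so that the surviving term is $\alpha_\ra^{\gamma}$ and not $\alpha_\rs^{\gamma}$, and confirming that the $\Gamma$ factors raised to the \emph{same} power $\gamma$ really do cancel identically rather than merely approximately.
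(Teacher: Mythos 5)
Your proposal is correct and follows essentially the same route as the paper: write $\mbox{Prob}(X>Y)$ as a double integral using independence, evaluate the inner integral as the CDF $1 - e^{-((x-\beta)/\alpha_\ra)^\gamma}$, split into a full-density piece equal to $1$ and a cross term, evaluate that term to get $\alpha_\ra^\gamma/(\alpha_\rs^\gamma+\alpha_\ra^\gamma)$, and cancel the common $\Gamma(1+\gamma^{-1})^\gamma$ factors after substituting the mean relations. The only cosmetic difference is in the single remaining integral: you linearize both exponents via the explicit substitution $w=t^\gamma$ (reducing to an elementary exponential integral), while the paper defines $\alpha$ by $1/\alpha^\gamma = 1/\alpha_\rs^\gamma + 1/\alpha_\ra^\gamma$ and recognizes the integrand as a rescaled Weibull density integrating to one --- the same computation in different packaging.
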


\begin{proof} Let $X$ and $Y$ be independent random variables with Weibull
distributions $(\alpha_\rs,\beta,\gamma)$ and $(\alpha_\ra,\beta,\gamma)$
respectively, where $X$ is the number of runs scored and $Y$ the
number of runs allowed per game. Recall from (\ref{eq:genweibmeans}) that \begin{eqnarray} \alpha_\rs & \ = \ &
\frac{\rs-\beta}{\Gamma(1+\gamma^{-1})}, \ \ \ \ \ \ \ \alpha_\ra \ = \ \frac{\ra-\beta}{\Gamma(1+\gamma^{-1})}.\end{eqnarray}

We need only calculate the probability that $X$ exceeds $Y$. Below we constantly use the integral of a probability density is $1$ (for example, in moving from the second to last to the final line). We
have \begin{eqnarray} & & \mbox{Prob}(X > Y) \ = \ \int_{x=\beta}^\infty \int_{y=\beta}^x f(x;\alpha_\rs,\beta,\gamma) f(y;\alpha_\ra,\beta,\gamma) {\rm d} y\; {\rm d} x \nonumber\\
& & = \ \int_{x=\beta}^\infty\int_{y=\beta}^x \frac{\gamma}{\alpha_\rs} \left(\frac{x-\beta}{\alpha_{RS}}\right)^{\gamma-1} e^{-((x-\beta)/\alpha_\rs)^\gamma} \frac{\gamma}{\alpha_\ra}\left(\frac{y-\beta}{\alpha_{\ra}}\right)^{\gamma-1} e^{- ((y-\beta)/\alpha_\ra)^\gamma} {\rm d} y\; {\rm d} x \nonumber\\ & & = \
\int_{x=0}^\infty\frac{\gamma}{\alpha_\rs} \left(\frac{x}{\alpha_{RS}}\right)^{\gamma-1} e^{-(x/\alpha_\rs)^\gamma} \left[
\int_{y=0}^{x} \frac{\gamma}{\alpha_\ra}\left(\frac{y}{\alpha_{\ra}}\right)^{\gamma-1} e^{-
(y/\alpha_\ra)^\gamma} {\rm d} y \right] {\rm d} x  \nonumber\\ & & = \ \int_{x=0}^\infty\frac{\gamma}{\alpha_\rs}
\left(\frac{x}{\alpha_{RS}}\right)^{\gamma-1} e^{-(x/\alpha_\rs)^\gamma} \left[1
- e^{-(x/\alpha_{\ra})^\gamma}\right] {\rm d} x  \nonumber\\ & & = \ 1 -
\int_{x=0}^\infty\frac{\gamma}{\alpha_\rs}
\left(\frac{x}{\alpha_{RS}}\right)^{\gamma-1} e^{-(x/\alpha)^\gamma} {\rm d} x,\end{eqnarray}
where we have set \begin{equation} \frac1{\alpha^\gamma} \ = \ \frac1{\alpha_\rs^\gamma} +
\frac{1}{\alpha_\ra^\gamma} \ = \ \frac{\alpha_\rs^\gamma +
\alpha_\ra^\gamma}{\alpha_\rs^\gamma \alpha_\ra^\gamma}.\end{equation} The above tells us that we are essentially integrating a new Weibull whose parameter $\alpha$ is given by the above relation; expressions like this are common (see for example center of mass calculations, or adding resistors in parallel).  Therefore
\begin{eqnarray}\label{eq:derivweibpythag1} \mbox{Prob}(X
> Y) & \ = \ & 1 - \frac{\alpha^\gamma}{\alpha_\rs^\gamma} \int_{0}^\infty
\frac{\gamma}{\alpha}
\left(\frac{x}{\alpha}\right)^{\gamma-1} e^{(x/\alpha)^\gamma} {\rm d} x \nonumber\\
& = & 1 - \frac{\alpha^\gamma}{\alpha_\rs^\gamma} \nonumber\\ & = & 1 -
\frac1{\alpha_\rs^\gamma} \frac{\alpha_\rs^\gamma \alpha_\ra^\gamma}{\alpha_\rs^\gamma +
\alpha_\ra^\gamma} \nonumber\\ & = & \frac{\alpha_\rs^\gamma}{\alpha_\rs^\gamma +
\alpha_\ra^\gamma}.\end{eqnarray} Substituting the relations for $\alpha_\rs$ and
$\alpha_\ra$ of (\ref{eq:genweibmeans}) into
(\ref{eq:derivweibpythag1}) yields \begin{eqnarray} \mbox{Prob}(X
> Y) & \ = \ &  \frac{(\rs-\beta)^\gamma}{(\rs-\beta)^\gamma + (\ra-\beta)^\gamma}, \end{eqnarray}
which completes the proof of Theorem \ref{thm:pythmweibull}, the Pythagorean formula. \hfill Q.E.D.
\end{proof}


\section{The Pythagorean Formula: Applications}\label{sec:pythagapplication}

It is now time to apply our mathematical models and results to the central economics issue of this chapter: In each situation, how much is a run worth? We content ourselves with answering this from the point of view of the season. Thus if we score $x$ runs and allow $y$, and we have a player who increases our run production by $s$, how much is that worth? Similarly, how much would they be worth if they prevented $s$ runs from scoring?

We answer this question not in dollars, but in additional games won or lost. Translating the number of wins per season into dollar amounts is a fascinating and obviously important question, which the interested reader is encouraged to pursue. A good resource is Nate Silver's chapter ``Is Alex Rodriguez Overpaid'' in \emph{Baseball Between the Numbers: Why Everything You Know About the Game Is Wrong} [10]. There are also numerous insightful blog posts, such as Phil Birnbaum's ``Sabermetric Research: Saturday, April 24, 2010'' (see [1]). In this chapter we concern ourselves with determining the number of wins gained or lost, which these and other sources can convert to monetary amounts. As not all wins are worth the same (going from 65 to 75 wins doesn't alter the fact that the season was a bust, but going from 85 wins to 95 wins almost surely punches your ticket to the playoffs), it is essential that we can determine changes from any state.

In Figure \ref{fig:pythaggainsave} we plot the addition wins per season with $\gamma = 1.83$ and $s=10$. We plot around a league average of 700 runs scored per season, which was essentially the average in 2012 (see \S\ref{sec:pythaglinearization}). We let $s=10$ as the common adage is every 10 additional runs translates to one more win per season.

\begin{figure}[h]
\begin{center}
\scalebox{.6125}{\includegraphics{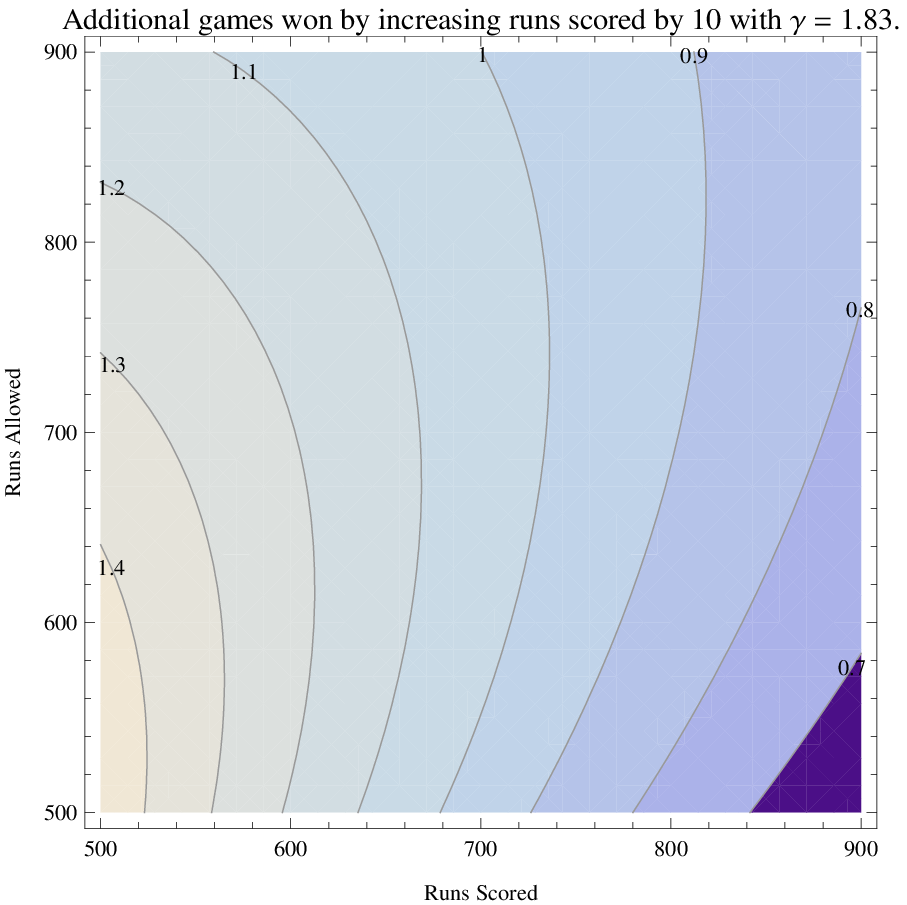}}\ \scalebox{.6125}{\includegraphics{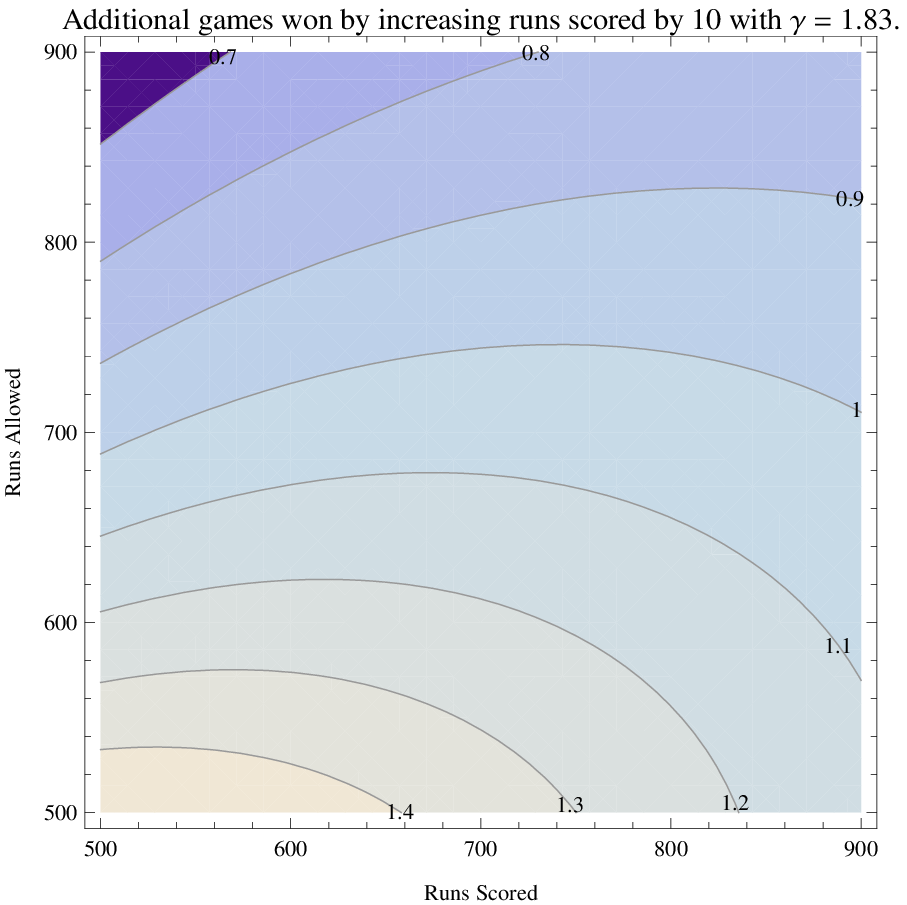}}
\caption{\label{fig:pythaggainsave} The predicted number of additional wins with $\gamma = 1.83$: (left) scoring 10 more per season; (right) preventing 10 more per season. Letting $\mathcal{P}(x,y;\gamma) = x^\gamma / (x^\gamma + y^\gamma)$, the left plot is $\mathcal{P}(x+10,y;1.83) - \mathcal{P}(x,y;1.83)$, while the right is $\mathcal{P}(x,y-10;1.83) - \mathcal{P}(x,y;1.83)$.}
\end{center}
\end{figure}

Not surprisingly, the more runs we score the more valuable preventing runs is to scoring runs, and vice-versa; what is nice about the Pythagorean formula is that it quantifies exactly what this trade-off is. To make it easier to see, in Figure \ref{fig:pythaggainminussave} we plot the difference in wins gained from scoring 10 more runs to wins gained from preventing 10 more runs. The plot is positive in the upper left region, indicating that if our runs scored and allowed places us here then it is more valuable to score runs; in the lower right region the conclusion is the opposite.

\begin{figure}[h]
\begin{center}
\scalebox{.6125}{\includegraphics{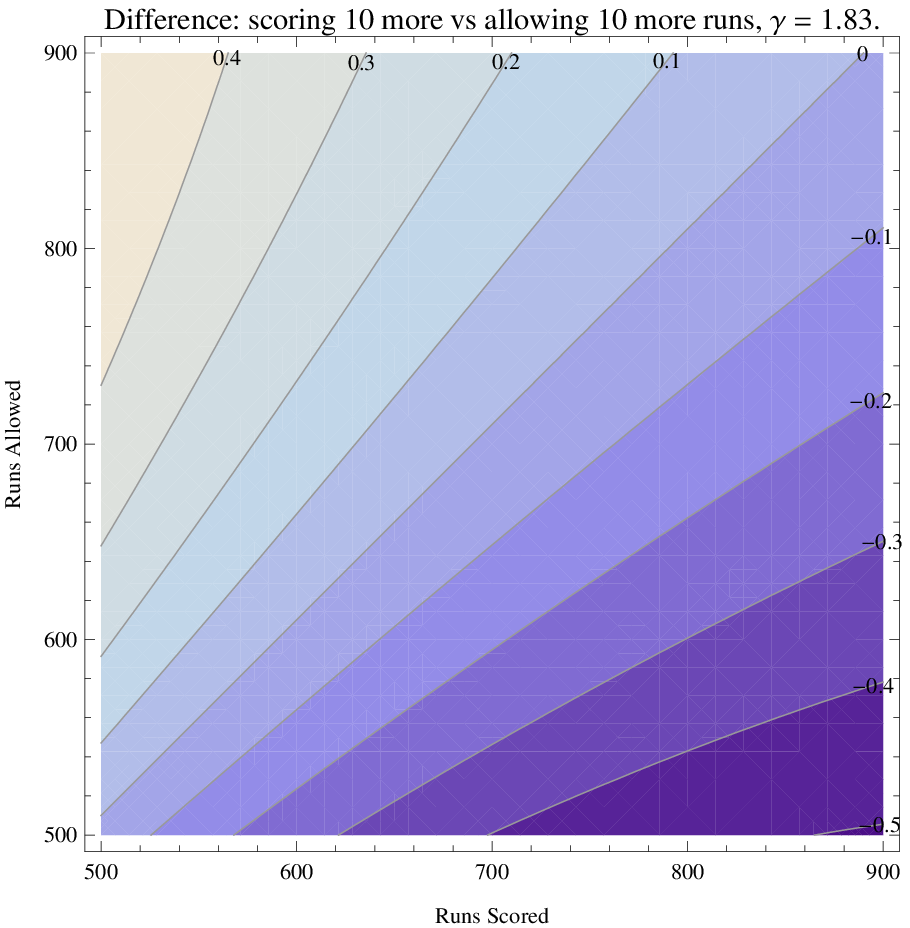}}\ \scalebox{.6125}{\includegraphics{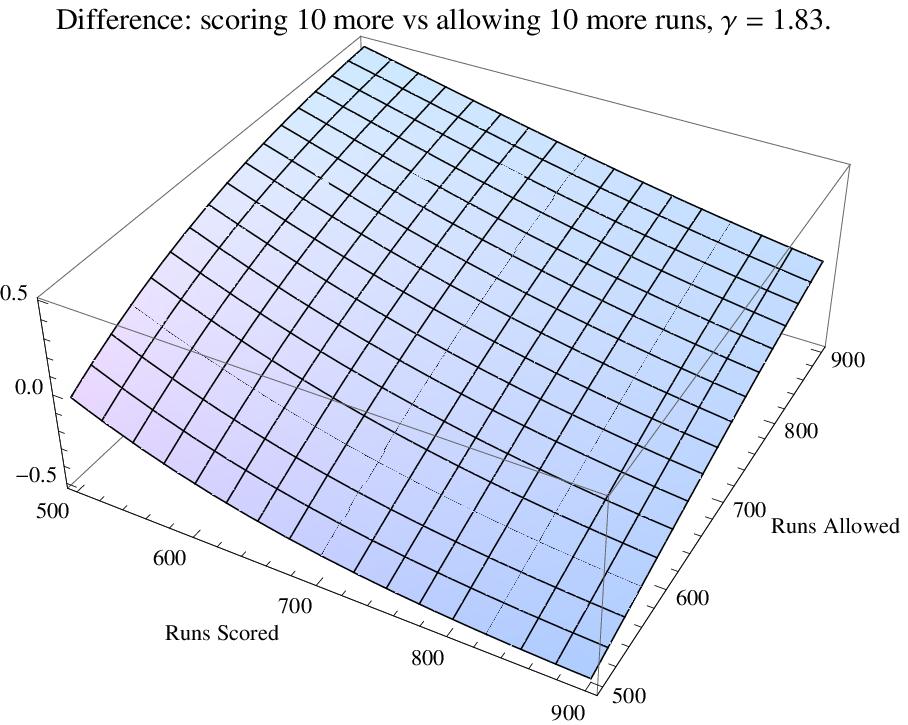}}
\caption{\label{fig:pythaggainminussave} The difference in the predicted number of additional wins with $\gamma = 1.83$ from scoring 10 more per season versus preventing 10 more per season. Letting $\mathcal{P}(x,y;\gamma) = x^\gamma / (x^\gamma + y^\gamma)$, the difference is $\mathcal{P}(x+10,y;\gamma) -\mathcal{P}(x,y-10;\gamma)$.}
\end{center}
\end{figure}


\section{The Pythagorean Formula: Verification}\label{sec:pythagverification}

We have two goals in this section. First, we want to show our assumption of the runs scored and allowed being drawn from independent Weibulls is reasonable. Second, we want to find the optimal value of $\gamma$, and check the conventional wisdom that the Pythagorean formula is typically accurate to about four games a season.

There are many methods available for such analyses. Two popular ones are the Method of Least Squares, and the Method of Maximum Likelihood. As the two give similar results, we use the Method of Least Squares to attack the independence and distributional questions, and the Method of Maximum Likelihood to estimate $\gamma$ and the error in the formula.

\subsection{Analysis of Independence and Distributional Assumptions}

We use the Method of Least Squares to analyze the 30 teams, which are ordered by the number of overall season wins and by league, from the 2012 season to see how closely our model fits the observed scoring patterns. We briefly summarize the procedure. For each team we find $\alpha_{\rs}, \alpha_{\ra}, \beta$ and $\gamma$ that minimize the sum of squared errors from the runs scored data plus the sum of squared errors from the runs allowed data; instead of the Method of Least Squares we could also use the Method of Maximum Likelihood (discussed in the next subsection), which would return similar values. We always take $\beta = -1/2$ and let $\gamma$ vary among teams (though we could also perform the analysis with the same $\gamma$ for all). We partition the runs data into the bins \begin{equation} [-.5, .5), \ \ [.5, 1.5], \ \ [1.5, 2.5], \ \ \dots, \ \ [8.5, 9.5), \ \ [9.5, 11.5), \ \ [11.5, \infty).\end{equation} Let Bin($k$) be the $k$\textsuperscript{th} data bin, ${\rm RS}_{{\rm obs}}(k)$ (respectively $\ra_{{\rm obs}}(k)$) be the observed number of games with runs scored (allowed) in Bin($k$), and $A(\alpha,\beta, \gamma, k)$ be the area under the Weibull distribution with parameters $(\alpha, \beta, \gamma)$ in Bin($k$). Then for each team we are searching for the values of $(\alpha_{\rs}, \alpha_{\ra}, \gamma)$ that minimize
\begin{eqnarray}
& & \sum_{k=1}^{12} \left({\rm RS}_{{\rm obs}}(k) - 162 \cdot A(\alpha_{\rs}, -.5, \gamma, k)\right)^2  \nonumber\\ & & \ \ \ \ \
+ \ \sum_{k=1}^{12} \left({\rm RA}_{{\rm obs}}(k) - 162 \cdot A(\alpha_{\ra}, -.5, \gamma, k)\right)^2
\end{eqnarray} (the 162 is because the teams play 162 games in a season; if a team has fewer games, either due to a cancelled game or because we are analyzing another sport, this number is trivially adjusted).

For each team we found the best Weibulls with parameters $(\alpha_{\rs}, -.5, \gamma)$ and $(\alpha_{\ra}, -.5, \gamma)$ and then compared
the number of wins, losses, and won-loss percentage predicted by our model with the recorded data. The results are summarized in Table \ref{table:pythagleastsquaresWDiffGamesGamma}.

\begin{table}
\begin{center}
\begin{tabular}{lrrrrrrrrrrrr}
\hline
 {\rm Team} & \ \ \ &   {\rm Obs} {\rm W} & \ \ \ &   {\rm Pred} {\rm W} & \ \ \ &   {\rm Obs \%} & \ \ \ &   {\rm Pred \%} & \ \ \ &   {\rm Diff} {\rm Games} & \ \ \ &   {\rm      $\gamma $  } \\ \hline
 {\rm Washington Nationals} & \ \ \ &   98 & \ \ \ &   97.5 & \ \ \ &   0.605 & \ \ \ &   0.602 & \ \ \ &   0.5 & \ \ \ &   1.76 \\
 {\rm Cincinnati Reds} & \ \ \ &   97 & \ \ \ &   90.7 & \ \ \ &   0.599 & \ \ \ &   0.560 & \ \ \ &   6.3 & \ \ \ &   1.80 \\
 {\rm New York Yankees} & \ \ \ &   95 & \ \ \ &   96.0 & \ \ \ &   0.586 & \ \ \ &   0.593 & \ \ \ &   -1.0 & \ \ \ &   1.95 \\
 {\rm Oakland Athletics} & \ \ \ &   94 & \ \ \ &   89.8 & \ \ \ &   0.580 & \ \ \ &   0.554 & \ \ \ &   4.2 & \ \ \ &   1.54 \\
 {\rm San Francisco Giants} & \ \ \ &   94 & \ \ \ &   86.1 & \ \ \ &   0.580 & \ \ \ &   0.531 & \ \ \ &   7.9 & \ \ \ &   1.72 \\
 {\rm Atlanta Braves} & \ \ \ &   94 & \ \ \ &   89.4 & \ \ \ &   0.580 & \ \ \ &   0.552 & \ \ \ &   4.6 & \ \ \ &   1.51 \\
 {\rm Texas Rangers} & \ \ \ &   93 & \ \ \ &   91.0 & \ \ \ &   0.574 & \ \ \ &   0.562 & \ \ \ &   2.0 & \ \ \ &   1.69 \\
 {\rm Baltimore Orioles} & \ \ \ &   93 & \ \ \ &   83.1 & \ \ \ &   0.574 & \ \ \ &   0.513 & \ \ \ &   9.9 & \ \ \ &   1.66 \\
 {\rm Tampa Bay Rays} & \ \ \ &   90 & \ \ \ &   90.9 & \ \ \ &   0.556 & \ \ \ &   0.561 & \ \ \ &   -0.9 & \ \ \ &   1.75 \\
 {\rm Los Angeles Angels} & \ \ \ &   89 & \ \ \ &   86.4 & \ \ \ &   0.549 & \ \ \ &   0.533 & \ \ \ &   2.6 & \ \ \ &   1.59 \\
 {\rm Detroit Tigers} & \ \ \ &   88 & \ \ \ &   94.7 & \ \ \ &   0.543 & \ \ \ &   0.585 & \ \ \ &   -6.7 & \ \ \ &   1.89 \\
 {\rm St. Louis Cardinals} & \ \ \ &   88 & \ \ \ &   91.0 & \ \ \ &   0.543 & \ \ \ &   0.562 & \ \ \ &   -3.0 & \ \ \ &   1.66 \\
 {\rm Los Angeles Dodgers} & \ \ \ &   86 & \ \ \ &   87.9 & \ \ \ &   0.531 & \ \ \ &   0.542 & \ \ \ &   -1.9 & \ \ \ &   1.65 \\
 {\rm Chicago White Sox} & \ \ \ &   85 & \ \ \ &   87.1 & \ \ \ &   0.525 & \ \ \ &   0.538 & \ \ \ &   -2.1 & \ \ \ &   1.66 \\
 {\rm Milwaukee Brewers} & \ \ \ &   83 & \ \ \ &   85.0 & \ \ \ &   0.512 & \ \ \ &   0.525 & \ \ \ &   -2.0 & \ \ \ &   1.75 \\
 {\rm Philadelphia Phillies} & \ \ \ &   81 & \ \ \ &   76.7 & \ \ \ &   0.500 & \ \ \ &   0.474 & \ \ \ &   4.3 & \ \ \ &   1.72 \\
 {\rm Arizona Diamondbacks} & \ \ \ &   81 & \ \ \ &   84.8 & \ \ \ &   0.500 & \ \ \ &   0.524 & \ \ \ &   -3.8 & \ \ \ &   1.61 \\
 {\rm Pittsburgh Pirates} & \ \ \ &   79 & \ \ \ &   80.3 & \ \ \ &   0.488 & \ \ \ &   0.496 & \ \ \ &   -1.3 & \ \ \ &   1.63 \\
 {\rm San Diego Padres} & \ \ \ &   76 & \ \ \ &   74.7 & \ \ \ &   0.469 & \ \ \ &   0.461 & \ \ \ &   1.3 & \ \ \ &   1.65 \\
 {\rm Seattle Mariners} & \ \ \ &   75 & \ \ \ &   74.6 & \ \ \ &   0.463 & \ \ \ &   0.461 & \ \ \ &   0.4 & \ \ \ &   1.59 \\
 {\rm New York Mets} & \ \ \ &   74 & \ \ \ &   75.7 & \ \ \ &   0.457 & \ \ \ &   0.467 & \ \ \ &   -1.7 & \ \ \ &   1.63 \\
 {\rm Toronto Blue Jays} & \ \ \ &   73 & \ \ \ &   73.7 & \ \ \ &   0.451 & \ \ \ &   0.455 & \ \ \ &   -0.7 & \ \ \ &   1.66 \\
 {\rm Kansas City Royals} & \ \ \ &   72 & \ \ \ &   74.8 & \ \ \ &   0.444 & \ \ \ &   0.462 & \ \ \ &   -2.8 & \ \ \ &   1.78 \\
 {\rm Boston Red Sox} & \ \ \ &   69 & \ \ \ &   73.6 & \ \ \ &   0.426 & \ \ \ &   0.455 & \ \ \ &   -4.6 & \ \ \ &   1.72 \\
 {\rm Miami Marlins} & \ \ \ &   69 & \ \ \ &   76.1 & \ \ \ &   0.426 & \ \ \ &   0.470 & \ \ \ &   -7.1 & \ \ \ &   1.74 \\
 {\rm Cleveland Indians} & \ \ \ &   68 & \ \ \ &   65.2 & \ \ \ &   0.420 & \ \ \ &   0.402 & \ \ \ &   2.8 & \ \ \ &   1.76 \\
 {\rm Minnesota Twins} & \ \ \ &   66 & \ \ \ &   65.8 & \ \ \ &   0.407 & \ \ \ &   0.406 & \ \ \ &   0.2 & \ \ \ &   1.91 \\
 {\rm Colorado Rockies} & \ \ \ &   64 & \ \ \ &   71.0 & \ \ \ &   0.395 & \ \ \ &   0.438 & \ \ \ &   -7.0 & \ \ \ &   1.79 \\
 {\rm Chicago Cubs} & \ \ \ &   61 & \ \ \ &   70.6 & \ \ \ &   0.377 & \ \ \ &   0.436 & \ \ \ &   -9.6 & \ \ \ &   1.58 \\
 {\rm Houston Astros} & \ \ \ &   55 & \ \ \ &   61.3 & \ \ \ &   0.340 & \ \ \ &   0.379 & \ \ \ &   -6.3 & \ \ \ &   1.61 \\
\hline
\end{tabular}
\caption{\label{table:pythagleastsquaresWDiffGamesGamma} Results from best fit values from the Method of Least Squares, displaying the observed and predicted number of wins, winning percentage, and difference in games won and predicted for the 2012 season.}
\end{center}
\end{table}


The mean of $\gamma$ over the 30 teams for the 2012 season is 1.70 with a standard deviation of .11. This is slightly lower than the value in the literature of 1.82. The difference between the two methods is that our value of $\gamma$ is a consequence of our model, whereas the 1.82 comes from assuming the Pythagorean formula is valid and finding which exponent gives the best fit to the observed winning percentages. We discuss ways to improve our model in \S\ref{sec:pythaggeneralization}.

Comparing the predicted number of wins with the observed number of wins, we see that the mean difference between these quantities is about -.52 with a standard deviation of about 4.61. This data is misleading, though, as the mean difference is small as these are signed quantities. It is thus better to examine the absolute value of the difference between observed and predicted wins. Doing so gives an average value of about 3.65 with a standard deviation around 2.79, consistent with the empirical result that the Pythagorean formula is usually accurate to around four wins a season.

We next examine each team's $z$-score for the difference between the observed and predicted runs scored and runs allowed. A $z$-test is appropriate
here because of the large number of games played by each team, a crucial difference between baseball and football. The critical value corresponding to a 95\% confidence level is 1.96, while the value for the 99\% level is 2.575. The $z$-score (for runs scored) for a given team is defined as follows. Let $\rs_{{\rm obs}}$ denote the observed average runs scored, $\rs_{{\rm pred}}$ the predicted average runs scored (from the best fit Weibull), $\sigma_{{\rm obs}}$ the standard deviation of the observed runs scored, and remember there are 162 games in a season. Then \begin{equation} z_\rs \ = \ \frac{\rs_{{\rm obs}} - \rs_{{\rm pred}}}{\sigma_{{\rm obs}} / \sqrt{162}}. \end{equation} We see in Table \ref{table:pythagleastsquaresztests} that both the runs scored and runs allowed $z$-statistics almost always fall well below 1.96 in absolute value, indicating that the parameters estimated by the Method of Least Squares predict the observed data well. We could do a Bonferroni adjustment for multiple comparisons as these are not independent comparisons, which allows us to divide the confidence levels by 30 (the number of comparisons); this is a very conservative statistic. Doing so increases the thresholds to approximately 2.92 and 3.38, to the point that all values are in excellent agreement with theory.

\begin{table}
\begin{center}
\begin{tabular}{lrrrrrrrrrrrr}
\hline
 {\rm Team} & \ \ \ &  {\rm Obs RS} & \ \ \ &  {\rm Pred RS} & \ \ \ &  {\rm $z$-stat} & \ \ \ &  {\rm Obs RA} & \ \ \ &  {\rm Pred RA} & \ \ \ &  {\rm $z$-stat} \\
 {\rm Washington Nationals} & \ \ \ &  4.51 & \ \ \ &  4.54 & \ \ \ &  -0.13 & \ \ \ &  3.67 & \ \ \ &  3.49 & \ \ \ &  0.87 \\
 {\rm Cincinnati Reds} & \ \ \ &  4.13 & \ \ \ &  4.13 & \ \ \ &  {\rm   0.00} & \ \ \ &  3.63 & \ \ \ &  3.55 & \ \ \ &  0.39 \\
 {\rm New York Yankees} & \ \ \ &  4.96 & \ \ \ &  5.02 & \ \ \ &  -0.24 & \ \ \ &  4.12 & \ \ \ &  4.05 & \ \ \ &  0.33 \\
 {\rm Oakland Athletics} & \ \ \ &  4.40 & \ \ \ &  4.48 & \ \ \ &  -0.30 & \ \ \ &  3.79 & \ \ \ &  3.82 & \ \ \ &  -0.15 \\
 {\rm San Francisco Giants} & \ \ \ &  4.43 & \ \ \ &  4.36 & \ \ \ &  0.32 & \ \ \ &  4.01 & \ \ \ &  4.02 & \ \ \ &  -0.05 \\
 {\rm Atlanta Braves} & \ \ \ &  4.32 & \ \ \ &  4.39 & \ \ \ &  -0.27 & \ \ \ &  3.70 & \ \ \ &  3.76 & \ \ \ &  -0.27 \\
 {\rm Texas Rangers} & \ \ \ &  4.99 & \ \ \ &  4.86 & \ \ \ &  0.48 & \ \ \ &  4.36 & \ \ \ &  4.13 & \ \ \ &  0.88 \\
 {\rm Baltimore Orioles} & \ \ \ &  4.40 & \ \ \ &  4.41 & \ \ \ &  -0.09 & \ \ \ &  4.35 & \ \ \ &  4.26 & \ \ \ &  0.35 \\
 {\rm Tampa Bay Rays} & \ \ \ &  4.30 & \ \ \ &  4.18 & \ \ \ &  0.52 & \ \ \ &  3.56 & \ \ \ &  3.57 & \ \ \ &  -0.04 \\
 {\rm Los Angeles Angels} & \ \ \ &  4.73 & \ \ \ &  4.84 & \ \ \ &  -0.42 & \ \ \ &  4.31 & \ \ \ &  4.41 & \ \ \ &  -0.38 \\
 {\rm Detroit Tigers} & \ \ \ &  4.48 & \ \ \ &  4.49 & \ \ \ &  -0.03 & \ \ \ &  4.14 & \ \ \ &  3.66 & \ \ \ &  2.03 \\
 {\rm St. Louis Cardinals} & \ \ \ &  4.72 & \ \ \ &  4.73 & \ \ \ &  -0.05 & \ \ \ &  4.00 & \ \ \ &  4.01 & \ \ \ &  -0.02 \\
 {\rm Los Angeles Dodgers} & \ \ \ &  3.93 & \ \ \ &  4.07 & \ \ \ &  -0.67 & \ \ \ &  3.69 & \ \ \ &  3.63 & \ \ \ &  0.29 \\
 {\rm Chicago White Sox} & \ \ \ &  4.62 & \ \ \ &  4.60 & \ \ \ &  0.09 & \ \ \ &  4.17 & \ \ \ &  4.15 & \ \ \ &  0.09 \\
 {\rm Milwaukee Brewers} & \ \ \ &  4.79 & \ \ \ &  4.89 & \ \ \ &  -0.41 & \ \ \ &  4.52 & \ \ \ &  4.59 & \ \ \ &  -0.30 \\
 {\rm Philadelphia Phillies} & \ \ \ &  4.22 & \ \ \ &  4.08 & \ \ \ &  0.61 & \ \ \ &  4.20 & \ \ \ &  4.37 & \ \ \ &  -0.82 \\
 {\rm Arizona Diamondbacks} & \ \ \ &  4.53 & \ \ \ &  4.59 & \ \ \ &  -0.24 & \ \ \ &  4.25 & \ \ \ &  4.30 & \ \ \ &  -0.26 \\
 {\rm Pittsburgh Pirates} & \ \ \ &  4.02 & \ \ \ &  4.12 & \ \ \ &  -0.45 & \ \ \ &  4.16 & \ \ \ &  4.17 & \ \ \ &  -0.04 \\
 {\rm San Diego Padres} & \ \ \ &  4.02 & \ \ \ &  4.09 & \ \ \ &  -0.35 & \ \ \ &  4.38 & \ \ \ &  4.55 & \ \ \ &  -0.76 \\
 {\rm Seattle Mariners} & \ \ \ &  3.82 & \ \ \ &  3.68 & \ \ \ &  0.60 & \ \ \ &  4.02 & \ \ \ &  4.11 & \ \ \ &  -0.44 \\
 {\rm New York Mets} & \ \ \ &  4.01 & \ \ \ &  4.06 & \ \ \ &  -0.24 & \ \ \ &  4.38 & \ \ \ &  4.44 & \ \ \ &  -0.26 \\
 {\rm Toronto Blue Jays} & \ \ \ &  4.42 & \ \ \ &  4.37 & \ \ \ &  0.19 & \ \ \ &  4.84 & \ \ \ &  4.93 & \ \ \ &  -0.35 \\
 {\rm Kansas City Royals} & \ \ \ &  4.17 & \ \ \ &  4.21 & \ \ \ &  -0.17 & \ \ \ &  4.60 & \ \ \ &  4.63 & \ \ \ &  -0.09 \\
 {\rm Boston Red Sox} & \ \ \ &  4.53 & \ \ \ &  4.33 & \ \ \ &  0.79 & \ \ \ &  4.98 & \ \ \ &  4.87 & \ \ \ &  0.40 \\
 {\rm Miami Marlins} & \ \ \ &  3.76 & \ \ \ &  3.96 & \ \ \ &  -0.96 & \ \ \ &  4.47 & \ \ \ &  4.29 & \ \ \ &  0.80 \\
 {\rm Cleveland Indians} & \ \ \ &  4.12 & \ \ \ &  4.06 & \ \ \ &  0.22 & \ \ \ &  5.22 & \ \ \ &  5.21 & \ \ \ &  {\rm   0.00} \\
 {\rm Minnesota Twins} & \ \ \ &  4.33 & \ \ \ &  4.14 & \ \ \ &  0.71 & \ \ \ &  5.14 & \ \ \ &  5.16 & \ \ \ &  -0.12 \\
 {\rm Colorado Rockies} & \ \ \ &  4.68 & \ \ \ &  4.75 & \ \ \ &  -0.29 & \ \ \ &  5.49 & \ \ \ &  5.53 & \ \ \ &  -0.16 \\
 {\rm Chicago Cubs} & \ \ \ &  3.78 & \ \ \ &  3.89 & \ \ \ &  -0.50 & \ \ \ &  4.69 & \ \ \ &  4.67 & \ \ \ &  0.05 \\
 {\rm Houston Astros} & \ \ \ &  3.60 & \ \ \ &  3.57 & \ \ \ &  0.13 & \ \ \ &  4.90 & \ \ \ &  5.04 & \ \ \ &  -0.57 \\
\hline
\end{tabular}
\caption{\label{table:pythagleastsquaresztests} Method of Least Squares: $z$-tests for best fit runs scored and allowed.}
\end{center}
\end{table}

To further demonstrate the quality of the fit, in Figure \ref{figure:pythagpiratesRSRA} we compare the best fit Weibulls with the Pittsburgh Pirates (who were essentially a .500 team and thus in the middle of the pack). The fit is excellent .

\begin{figure}[h]
\begin{center}
\scalebox{.62}{\includegraphics{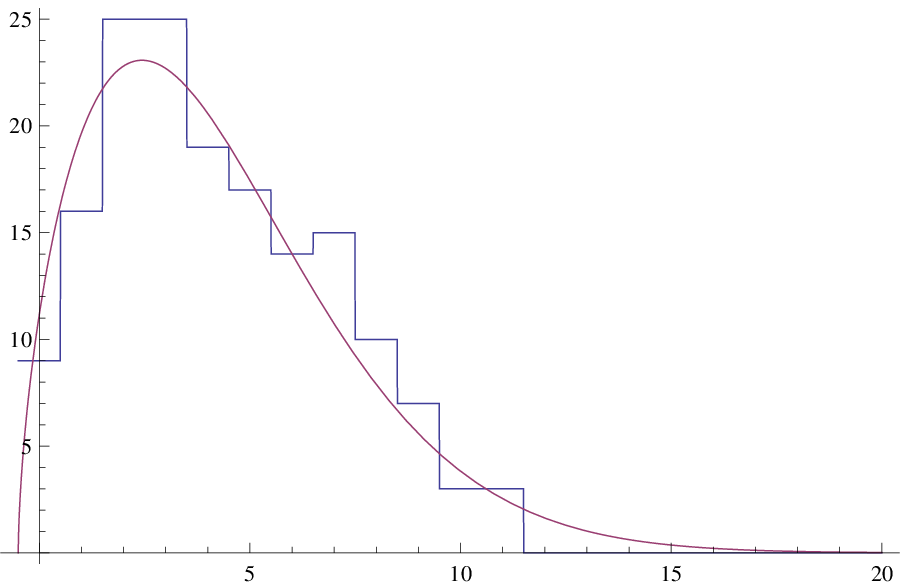}}\ \scalebox{.62}{\includegraphics{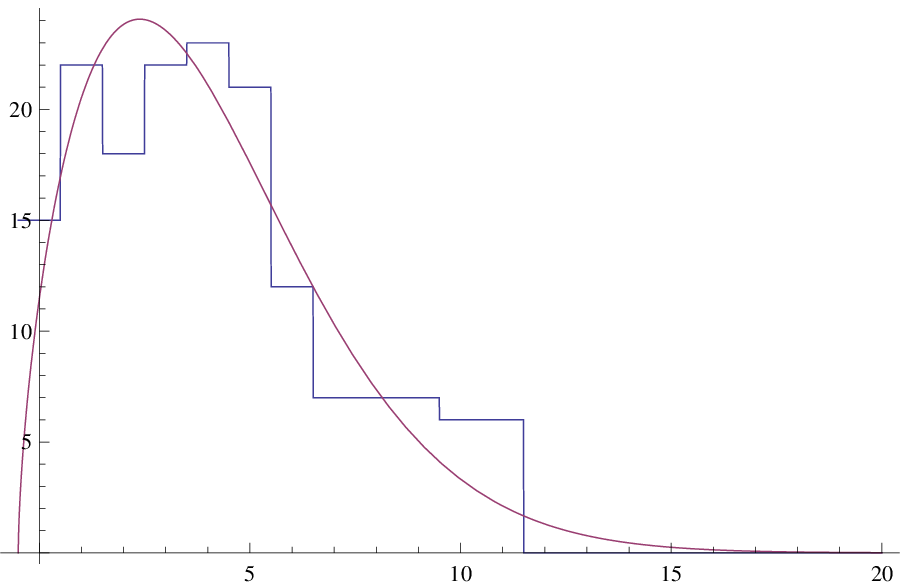}}
\caption{\label{figure:pythagpiratesRSRA} Comparison of the best fit Weibulls for runs scored (left) and allowed (right) for the 2012 Pittsburgh Pirates against the observed distribution of scores.}
\end{center}\end{figure}


We now come to the most important part of the analysis, testing the assumptions that the runs scored and allowed are given by independent Weibulls. We do this in two stages. We first see how well the Weibulls do fitting the data, and whether or not the runs scored and allowed are statistically independent (other than the restriction that they are not equal). We describe the analysis first, and then present the results in Table \ref{table:pythagleastsquaresRSRAindependence}. As the independence test is complicated by the presence of structural zeros (unattainable values), we provide a detailed description here for the benefit of the reader.

The first column in Table \ref{table:pythagleastsquaresRSRAindependence} is a $\chi^2$ goodness of fit test to determine how closely the observed data follows a Weibull distribution with the estimated parameters, using the same bins as before. Our test statistic is
\begin{eqnarray}
&  & \sum_{k=1}^{12} \frac{\left(\rs_{{\rm obs}}(k) - 162 \cdot A(\alpha_{\rs}, -.5, \gamma, k)\right)^2}
{162 \cdot A(\alpha_{\rs}, -.5, \gamma, k)} \nonumber\\ & & \ \ \ \ \ +\
\sum_{k=1}^{12} \frac{\left(\ra_{{\rm obs}}(k) - 162 \cdot A(\alpha_{\ra}, -.5, \gamma, k)\right)^2}
{162 \cdot A(\alpha_{\ra}, -.5, \gamma, k)}.
\end{eqnarray}
This test has 20 degrees of freedom, which corresponds to critical values of 31.41 (95\% level) and 37.57 (99\% level). Of course, as we have multiple comparisons we should again perform a Bonferroni adjustment. We divide the significance levels by 30, the number of comparisons, and thus the values increase to 43.67 and 48.75. Almost all the teams are now in range, with the only major outliers being the Yankees and the Rays, the two playoff teams from the American League East.

We now turn to the final key assumption, the independence of runs scored and runs allowed, by doing a $\chi^2$ test for
independence. This test involves creating a contingency table with the requirement that each row and column has at least one non-zero entry.
As the Miami Marlins had no games with 10 runs scored, we had to slightly modify our choice of bins to
\begin{equation}
[0,1), \ \ [1, 2), \ \ \dots, \ \ [9,11), \ \ [11, \infty);
\end{equation} as we are using the observed run data from games, we can have our bins with left endpoints at the integers.

We have an 11 $\times$ 11 contingency table. As runs scored cannot equal runs allowed in a game (games cannot end in a tie), we are forced to have zeroes along the diagonal. The constraint on the values of runs scored and runs allowed leads to an incomplete two-dimensional contingency table with $(11-1)^2 - 11 = 89$ degrees of freedom. We briefly review the theory of such tests with structural zeros in Appendix \ref{sec:pythagappendixindstructuralzeros}. The critical values for a $\chi^2$ test with 89 degrees of freedom are 113.15 (95\% level) and 124.12 (99\% level). Table \ref{table:pythagleastsquaresRSRAindependence} shows that all chi-square values for the teams in the 2012 season fall below the 99\% level, indicating that runs scored and runs allowed are behaving as if they are statistically independent. The fits are even better if we use the Bonferroni adjustments, which are 133.26 and 141.56.

\begin{table}
\begin{center}
\begin{tabular}{lrrrr}
\hline
 {\rm Team} & \ \ \ &  {\rm RS+RA}\ $\chi^2$: 20 d.f.  & \ \ \ &  {\rm Independence} $\chi^2$: 109 d.f  \\ \hline
 {\rm Washington Nationals} & \ \ \ &  53.80 & \ \ \ &  101.07 \\
 {\rm Cincinnati Reds} & \ \ \ &  33.69 & \ \ \ &  107.11 \\
 {\rm New York Yankees} & \ \ \ &  64.02 & \ \ \ &  82.82 \\
 {\rm Oakland Athletics} & \ \ \ &  22.34 & \ \ \ &  87.85 \\
 {\rm San Francisco Giants} & \ \ \ &  14.37 & \ \ \ &  89.57 \\
 {\rm Atlanta Braves} & \ \ \ &  32.34 & \ \ \ &  101.07 \\
 {\rm Texas Rangers} & \ \ \ &  26.49 & \ \ \ &  93.46 \\
 {\rm Baltimore Orioles} & \ \ \ &  11.90 & \ \ \ &  98.29 \\
 {\rm Tampa Bay Rays} & \ \ \ &  66.35 & \ \ \ &  120.25 \\
 {\rm Los Angeles Angels} & \ \ \ &  28.10 & \ \ \ &  105.73 \\
 {\rm  Detroit Tigers} & \ \ \ &  38.76 & \ \ \ &  98.96 \\
 {\rm St. Louis Cardinals} & \ \ \ &  36.32 & \ \ \ &  117.21 \\
 {\rm Los Angeles Dodgers} & \ \ \ &  31.70 & \ \ \ &  123.33 \\
 {\rm Chicago White Sox} & \ \ \ &  20.61 & \ \ \ &  121.33 \\
 {\rm Milwaukee Brewers} & \ \ \ &  49.51 & \ \ \ &  98.02 \\
 {\rm Philadelphia Phillies} & \ \ \ &  19.19 & \ \ \ &  93.78 \\
 {\rm Arizona Diamondbacks} & \ \ \ &  23.91 & \ \ \ &  78.44 \\
 {\rm Pittsburgh Pirates} & \ \ \ &  13.46 & \ \ \ &  103.85 \\
 {\rm San Diego Padres} & \ \ \ &  17.62 & \ \ \ &  92.87 \\
 {\rm Seattle Mariners} & \ \ \ &  9.79 & \ \ \ &  113.13 \\
 {\rm New York Mets} & \ \ \ &  42.88 & \ \ \ &  95.66 \\
 {\rm Toronto Blue Jays} & \ \ \ &  13.09 & \ \ \ &  86.81 \\
 {\rm Kansas City Royals} & \ \ \ &  22.51 & \ \ \ &  102.39 \\
 {\rm Boston Red Sox} & \ \ \ &  22.43 & \ \ \ &  99.18 \\
 {\rm Miami Marlins} & \ \ \ &  43.64 & \ \ \ &  121.32 \\
 {\rm Cleveland Indians} & \ \ \ &  26.62 & \ \ \ &  83.28 \\
 {\rm Minnesota Twins} & \ \ \ &  50.40 & \ \ \ &  115.04 \\
 {\rm Colorado Rockies} & \ \ \ &  24.30 & \ \ \ &  85.79 \\
 {\rm Chicago Cubs} & \ \ \ &  40.06 & \ \ \ &  90.72 \\
 {\rm Houston Astros} & \ \ \ &  41.16 & \ \ \ &  80.48 \\
\hline
\end{tabular}
\caption{\label{table:pythagleastsquaresRSRAindependence} Results from best fit values from the Method of Least Squares for 2012, displaying the quality of the fit of the Weibulls to the observed scoring data, and testing the independence of runs scored and allowed.}
\end{center}
\end{table}


\subsection{Analysis of $\gamma$ and Games Off}

Given a dataset and a statistical model, the method of maximum likelihood is a technique that computes the parameters of the model that make the observed data most probable. Maximum likelihood estimators have the desirable property of being asymptotically minimum variance unbiased estimators. Based on the statistical model in question, one constructs the likelihood function. For our model, if we have $B$ bins then the likelihood function is given by
\begin{eqnarray}
& &  L(\alpha_\rs, \alpha_\ra, -.5,\gamma)\ = \ \ncr{162}{\rso(1), \dots, \rso(B)} \prod_{k=1}^{B} A(\alpha_\rs,-.5,\gamma,k)^{\rso(k)}
\nonumber\\ & & \ \ \ \ \ \ \cdot \ \ncr{162}{\rao(1), \dots,
\rao(B)} \prod_{k=1}^{B}
A(\alpha_\ra,-.5,\gamma,k)^{\rao(k)}. \end{eqnarray}

The maximum likelihood estimators are found by determining the values of the parameters $\alpha_\rs$, $\alpha_\ra$ and $\gamma$  that maximize the likelihood function. In practice one typically maximizes the logarithm of the likelihood because it is both equivalent to and computationally easier than maximizing the likelihood function directly.

Using our model, we calculated the maximum likelihood estimators for each team. Figure \ref{figure:pythagmaxlikegammaplots} displays the average values of the parameter $\gamma$ for each season from 2007 to 2012, with error bars indicating the standard deviation. Note that the standard deviation of the $\gamma$ values for each season are similar to each other, with 2010 having the largest deviation. The mean value of $\gamma$ is about 1.69 with a standard deviation of .03.

\begin{figure}[h]
\begin{center}
\includegraphics[scale=1.0]{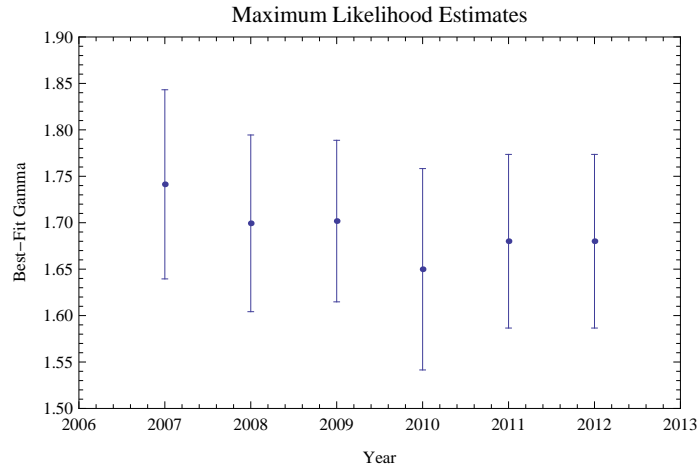}
\caption{\label{figure:pythagmaxlikegammaplots} Average value of $\gamma$ from the Method of Maximum Likelihood.}
\end{center}
\end{figure}

Using the maximum likelihood estimators, we then calculated the predicted number of games won for each team and compared this to the observed numbers. The average absolute value of this difference is shown for each year in Figure \ref{figure:pythagmaxlikegamesoffplots}, with error bars indicating the standard deviation. The mean of the absolute value of the games off by is approximately 3.81, with a standard deviation of about .94; these numbers are in-line with the conventional wisdom that the Pythagorean formula is typically accurate to about 4 games per season.

\begin{figure}[h]
\begin{center}
\includegraphics[scale=1.0]{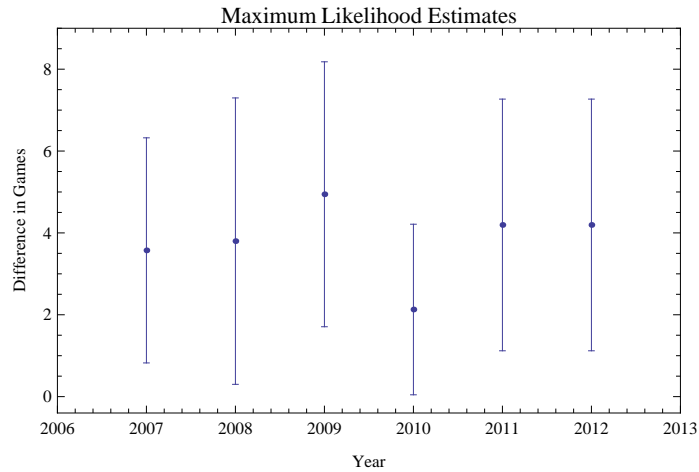}
\caption{\label{figure:pythagmaxlikegamesoffplots} Average absolute value of the difference between the observed and predicted number of wins from the Method of Maximum Likelihood.}
\end{center}
\end{figure}


\section{The Pythagorean Formula: Linearization}\label{sec:pythaglinearization}

The Pythagorean formula is not the only predictor used, though it is one of the earliest and most famous. A popular alternative is a linear statistic. For example, Michael Jones and Linda Tappin [7] state that a good estimate for a team's winning percentage is $.500 + {\rm B} (\rs - \ra)$, where $\rs$ and $\ra$ are runs scored and allowed, and ${\rm B}$ is a small positive constant whose average in their studies was around 0.00065. \emph{Note here there is a difference if we use total runs or average runs per game, as we no longer have a ratio.} We can of course use average runs per game, but that would require rescaling ${\rm B}$; thus, for the rest of this section, we work in total runs.

While their formula is simpler to use, computers are handling all the calculations anyway and thus the savings over the Pythagorean formula is not significant. Further, by applying a Taylor series expansion to the Pythagorean formula we obtain not only this linear predictor, but also find an interpretation of ${\rm B}$ in terms of $\gamma$ and the average runs scored by teams. We give a simple proof using multivariable calculus; see Appendix \ref{sec:pythaglinearizingpythag} for an alternative proof that only requires one variable calculus. The multivariable argument was first given in [3] by Steven J. Miller and Kevin Dayaratna; the one-dimensional argument is from an unpublished appendix.

Given a multivariable function $f(x,y)$, if $(x,y)$ is close to $(a,b)$ then $f(x,y)$ is approximately the first order Taylor series about the point $(a,b)$: \begin{equation} f(a,b) + \frac{\partial f}{\partial x}(a,b) (x - a) + \frac{\partial f}{\partial y}(a,b) (y-b). \end{equation} We take \begin{equation} f(x,y)  \ = \ \frac{x^\gamma}{x^\gamma + y^\gamma}, \ \ \ \ (a,b) \ = \ (\rt, \rt), \end{equation} where $\rt$ is the average of the total runs scored in the league. After some algebra we find \begin{equation} \frac{\partial f}{\partial x}(x,y) \ = \ \frac{\gamma x^{\gamma-1} y^\gamma}{(x^\gamma+y^\gamma)^2}, \ \ \ \frac{\partial f}{\partial x}(\rt,\rt) \ = \ \frac{\gamma}{4\rt}, \end{equation} which is also $-\frac{\partial f}{\partial y}(\rt,\rt)$. Taking $(x,y) = (\rs, \ra)$, the first order Taylor series expansion becomes \begin{eqnarray} & &  f(\rt, \rt) + \frac{\gamma}{4\rt} (\rs - \rt) - \frac{\gamma}{4\rt} (\ra - \rt)\nonumber\\ &  & \ \ \ \ = \ .500 + \frac{\gamma}{4\rt} (\rs - \ra).\end{eqnarray}

Thus, not only do we obtain a linear estimator, but we have a theoretical prediction for the all-important slope ${\rm B}$, namely that ${\rm B} = \gamma / (4\rt)$. See the paper by Dayaratna and Miller [3] for a detailed analysis of how well this ratio fits ${\rm B}$. We content ourselves here with remarking that in 2012 the two leagues combined to score 21,017 runs (see \textcolor{blue}{\url{http://www.baseball-almanac.com/hitting/hiruns4.shtml}}), for an average of 4.32449 runs per game per team, or an average of 700.567 runs per team. Using 1.83 for $\gamma$ and 700.567 for $\rt$, we predict ${\rm B}$ should be about 0.000653, agreeing beautifully with Jones and Tappin's findings (see \textcolor{blue}{\url{http://www.sciencedaily.com/releases/2004/03/040330090259.htm}}).


\section{The Future of the Pythagorean Formula}\label{sec:pythaggeneralization}

In the last section we saw how to use calculus to linearize the Pythagorean formula and obtain simpler estimators. Of course, linearizing the Pythagorean formula is not the only extension (and, as we are throwing away information, it is clearly not the optimal choice). In current research, the author and his students are exploring more accurate models for teams. There are two disadvantages to this approach. The first is that the resulting formula will almost surely be more complicated than the current one, and the second is that more information will be required than the aggregate scoring.

These restrictions, however, are not severe. As computers are doing all the calculations anyway, it is preferable to have a more accurate formula at the cost of additional computations that will never be noticed. The second item is more severe. The formulas under development will not be computable from the information available on common standings pages, but instead will require \emph{inning by inning} data. Thus these statistics will not be computable by the layperson reading the sports page; however, this is true about most advanced statistics. For example, it is impossible to calculate the win probability added for a player without going through each moment of a game.

We therefore see that these additional requirements are perfectly fine for applications. Teams are concerned with making optimal decisions, and the new data required is readily available to them (and in many cases to the average fan who can write a script program to cull it from publicly available websites). The current expanded version of the Pythagorean formula, which is work in progress by the first and third authors of this paper [8], will include the following three ingredients, all of which are easily done with readily available data.

\begin{enumerate}

\item Write the distribution for runs scored and allowed as a linear combinations of Weibulls.

\item Adjust the value of a run scored and allowed based on the ballpark.

\item Discount runs scored and allowed from a team's statistics based on the game state.

\end{enumerate}

The reason runs scored and allowed are modeled by Weibulls is that these lead to tractable, closed form integration. We can still perform the integration if instead each distribution is replaced with a linear combination of Weibulls; this is similar in spirit to the multitude of weights that occur in numerous other statistics, and will lead to a weighted sum of Pythagorean expressions for the winning percentage. An additional topic to be explored is allowing for dependencies between runs scored and allowed, but this is significantly harder and almost surely will lead to non-closed form solutions. It is highly desirable to have a closed form solution, as then we can estimate the value of a player by substituting their contributions into the formula and avoid the need for intense simulations.

The second change is trivial and easily done; certain ballparks favor pitchers while others favor hitters. The difficulty in scoring a run at Fenway Park is not the same as scoring one in Yankee Stadium, and thus ballpark effects should be used to adjust the values of the runs.

Finally, anyone who has turned on the TV during election night knows that certain states are called quickly after polls closed; the preliminary poll data is enough to predict with incredible accuracy what will happen. If a team has a large lead late in the game, they often rest their starters or use weaker pitchers, and thus the runs scored and allowed data here is not as indicative of a team's ability as earlier in the game. For example, in 2005 Mike Remlinger was traded to the Red Sox. In his first two games he allowed 5 runs to score (2 earned) while recording no outs; his ERA for the season to date was 5.45 and his win probability added was slightly negative. On August 16 the Sox and the Tigers were tied after 9 due to an Ortiz home-run in the ninth.\footnote{The data below is from \textcolor{blue}{http://www.baseball-reference.com/players/gl.cgi?id=remlimi01\&t=p\&year=2005} and \textcolor{blue}{http://scores.espn.go.com/mlb/boxscore?gameId=250816106}.} Ortiz had a three run shot the following inning, part of a 7 run offensive at the start of the tenth. With a seven run lead, this should not have been a critical situation, and Remlinger entered the game to pitch the bottom of the tenth. After retiring the first two batters, two walks and an infield single later it was bases loaded. Monroe then homered to make it 10-7, but Remlinger rallied and retired Inge. There were two reasons Papelbon was not brought in for the tenth. The first is that back then Papelbon was a starter (and in fact started that game!). More importantly, however, with a 7 run lead and just one inning to play, the leverage of the situation was low. Thus it is inappropriate to treat all runs equally. This mistake occurs in other sports; for example, when the Pythagorean formula is applied in football practitioners frequently do not adjust for the fact that at the end of the season certain teams have already locked up their playoff seed and are resting starters.

The hope is that incorporating these and other modifications will result in a more accurate Pythagorean formula. Though it will not be as easy to use, it will still be computable with known data and not require any simulations, and almost surely provide a better evaluation of a player's worth to their team.


\begin{acknowledgement}
The first author was partially supported by NSF Grants DMS0970067 and DMS1265673. He thanks Chris Chiang for suggesting the title of this talk, numerous students of his at Brown University and Williams College, as well as Cameron and Kayla Miller, for many lively conversations on mathematics and sports, Michael Stone for comments on an earlier draft, and Phil Birnbaum, Kevin Dayaratna, Warren Johnson and Chris Long for many sabermetrics discussions. This paper is dedicated to his great uncle Newt Bromberg, who assured him he would live long enough to see the Red Sox win it all, and the 2004, 2007 and 2013 Red Sox who made it happen (after the 2013 victory his six year old son Cameron turned to him and commented that he got to see it at a much younger age!). \end{acknowledgement}
\section{Appendix}
\addcontentsline{toc}{section}{Appendix}
%
%

\subsection{Calculating the Mean of a Weibull}\label{sec:pythagmeanweibull}

Letting $\mu_{\alpha,\beta,\gamma}$ denote the mean of $f(x;\alpha,\beta,\gamma)$,
we have \begin{eqnarray} \mu_{\alpha,\beta,\gamma} & \ = \ & \int_\beta^\infty x \cdot
\frac{\gamma}{\alpha} \left(\frac{x-\beta}{\alpha}\right)^{\gamma-1}
e^{-((x-\beta)/\alpha)^\gamma}{\rm d} x\nonumber\\ & = & \int_\beta^\infty  \alpha
\frac{x-\beta}{\alpha} \cdot \frac{\gamma}{\alpha}
\left(\frac{x-\beta}{\alpha}\right)^{\gamma-1}e^{-((x-\beta)/\alpha)^\gamma}{\rm d} x\ +\
\beta. \end{eqnarray} We change variables by setting $u =
\left(\frac{x-\beta}{\alpha}\right)^\gamma$. Then ${\rm d} u = \frac{\gamma}{\alpha}
\left(\frac{x-\beta}{\alpha}\right)^{\gamma-1}{\rm d} x$ and we have \begin{eqnarray}
\mu_{\alpha,\beta,\gamma} & \ = \ & \int_0^\infty \alpha u^{\gamma^{-1}} \cdot
e^{-u} {\rm d} u \ + \ \beta \nonumber\\ & = & \alpha \int_0^\infty e^{-u}
u^{1+\gamma^{-1}} \frac{{\rm d} u}{u} \ + \ \beta \nonumber\\ & = & \alpha
\Gamma(1+\gamma^{-1}) \ + \ \beta.\end{eqnarray}



\subsection{Independence test with structural zeros}\label{sec:pythagappendixindstructuralzeros}

We describe the iterative procedure needed to handle the structural zeros. A good reference is Bishop and Fienberg [2].

Let Bin($k$) be the $k$\textsuperscript{th} bin used in the chi-squared test for independence. For each team's incomplete contingency table, let $O_{r,c}$ be the observed number of games where the number of runs scored is in Bin($r$) and runs allowed is in Bin($c$). As games cannot end in a tie, we have $O_{r,r} = 0$ for all $r$.

We construct the expected contingency table with entries $E_{r,c}$ using an iterative process to find the maximum likelihood estimators for each entry. For $1 \leq r,c \leq 12$, let
\begin{equation}
 E^{(0)}_{r,c} \  =\  \left\{
     \begin{array}{lr}
       1 & \ {\rm if}\ r \neq c\ \\
       0 & \ {\rm if}\ r = c,
     \end{array}
   \right.
\end{equation}
and let
\begin{equation}
 X_{r,+}\ =\ \sum_{c} O_{r,c}, \ \ \ X_{c,+}\ =\ \sum_{r} O_{r,c}.
\end{equation}
We then have that
\begin{equation}
E^{(\ell)}_{r,c} \  =\  \left\{
     \begin{array}{lr}
     E^{(\ell -1)}_{r,c}X_{r,+} / \sum_{c} E^{(\ell -1)}_{r,c}\ \ \ {\rm if }\ \ell\ {\rm is\ odd}\\
        E^{(\ell -1)}_{r,c}X_{c,+} / \sum_{r} E^{(\ell -1)}_{r,c}\  \ \ {\rm if }\ \ell\ {\rm is\ even.}
   \end{array}
   \right.
\end{equation}

The values of $E_{r,c}$ can be found by taking the limit as $\ell \to \infty$ of $E^{(\ell)}_{r,c}$, and typically the convergence is rapid. The statistic
\begin{equation}
\sum_{r, c \atop r \neq c} \frac{(E_{r,c} - O_{r,c})^2}{E_{r,c}}
\end{equation}
follows a chi-square distribution with $(11-1)^2 - 11 = 89$ degrees of freedom.

\subsection{Linearizing Pythagoras}\label{sec:pythaglinearizingpythag}

Unlike the argument in \S\ref{sec:pythaglinearization}, we do not assume knowledge of multivariable calculus and derive the linearization using just single variable methods. The calculations below are of interest in their own right, as they highlight good approximation techniques.


We assume there is some exponent $\gamma$ such that the winning percentage, $\lwp$, is  \begin{equation} \lwp \ = \ \frac{\rs^\gamma}{\rs^\gamma+\ra^\gamma}, \end{equation} with $\rs$ and $\ra$ the total runs scored and allowed. We multiply the right hand side by $(1/\rs^\gamma)/(1/\rs^\gamma)$ and write $\ra^\gamma$ as $\rs^\gamma - (\rs^\gamma - \ra^\gamma)$, and find \begin{eqnarray} \lwp & \ = \ & \frac{1}{1 + \frac{\ra^\gamma}{\rs^\gamma}} \ = \ \left(1 + \frac{\ra^\gamma}{\rs^\gamma}\right)^{-1} \ = \ \left(1 + \frac{\rs^\gamma - (\rs^\gamma-\ra^\gamma)}{\rs^\gamma}\right)^{-1}  \nonumber\\ &=& \left(1 + 1 - \frac{\rs^\gamma-\ra^\gamma}{\rs^\gamma}\right)^{-1} \nonumber\\ &=& \left(2 \cdot \left(1 - \frac{\rs^\gamma-\ra^\gamma}{2\rs^\gamma}\right)\right)^{-1} \nonumber\\ &=& \frac12 \left(1 - \frac{\rs^\gamma-\ra^\gamma}{2\rs^\gamma}\right)^{-1}; \end{eqnarray} notice we manipulated the algebra to pull out a 1/2, which indicates an average team; thus the remaining factor is the fluctuations about average.

We now use the geometric series formula, which says that if $|r| < 1$ then \begin{equation} \frac1{1+r} \ = \ 1 + r + r^2 + r^3 + \cdots.\end{equation} We let $r = (\rs^\gamma-\ra^\gamma)/2\rs^\gamma$; since runs scored and runs allowed should be close to each other, the difference of their $\gamma$ powers divided by twice the number of runs scored should be small. Thus $r$ in our geometric expansion should be close to zero, and we find \begin{eqnarray}\label{eq:lwpfirstapprox} \lwp & \ = \ & \frac12\left(1 + \frac{\rs^\gamma-\ra^\gamma}{2\rs^\gamma} + \left(\frac{\rs^\gamma-\ra^\gamma}{2\rs^\gamma}\right)^2 + \left(\frac{\rs^\gamma-\ra^\gamma}{2\rs^\gamma}\right)^3 + \cdots\right) \nonumber\\ & \approx & .500 + \frac{\rs^\gamma-\ra^\gamma}{4\rs^\gamma}. \end{eqnarray}

We now make some approximations. We expect $\rs^\gamma-\ra^\gamma$ to be small, and thus $\frac{\rs^\gamma-\ra^\gamma}{2\rs}$ should be small. This means we only need to keep the constant and linear terms in the expansion. Note that if we only kept the constant term, there would be no dependence on points scored or allowed!

We need to do a little more analysis to obtain a formula that is linear in $\rs - \ra$. Let $\rt$ denote the average number of runs scored per team in the league. We can write $\rs = \rl + x_s$ and $\ra = \rl + x_a$, where it is reasonable to assume $x_s$ and $x_a$ are small relative to $\rt$. The Mean Value Theorem from Calculus says that if $f(x) = (\rt+x)^\gamma$, then \begin{equation} f(x_s) - f(x_a) \ = \ f'(x_c) (x_s - x_a), \end{equation} where $x_c$ is some intermediate point between $x_s$ and $x_a$. As $f'(x) = \gamma (\rt+x)^{\gamma-1}$, we find \begin{eqnarray} \rs^\gamma - \ra^\gamma & \ = \ & f(x_s) - f(x_a) \ = \  f'(x_c) (x_s - x_a) \ = \ \gamma (\rt+x_c)^{\gamma-1}(\rs - \ra), \nonumber\\ \end{eqnarray} as $x_s - x_a = \rs - \ra$. Substituting this into (\ref{eq:lwpfirstapprox}) gives \begin{eqnarray} \lwp & \ \approx \ & .500 + \frac{\gamma (\rt+x_c)^{\gamma-1} (\rs - \ra)}{4\rs^\gamma} \ = \  .500 + \frac{\gamma (\rt+x_c)^{\gamma-1}}{4\rs^\gamma} (\rs-\ra).\nonumber\\ \end{eqnarray}

We make one final approximation. We replace the factors of $\rt+x_c$ in the numerator and $\rs^\gamma$ in the denominator with $\rt^\gamma$, the league average, and reach \begin{equation} \lwp \ \approx \ .500 + \frac{\gamma}{4\rt} (\rs - \ra). \end{equation} Thus the simple linear approximation model reproduces the result from multivariable Taylor series, namely that the interesting coefficient ${\rm B}$ should be approximately $\gamma/(4\rt)$.


\section{References}

\begin{enumerate}

\item P. Birnbaum,  \emph{Sabermetric Research: Saturday, April 24, 2010}, see \hfill \\ \textcolor{blue}{\url{http://blog.philbirnbaum.com/2010/04/marginal-value-of-win-in-baseball.html}}.

\item Y. M. M. Bishop and S. E. Fienberg, \emph{Incomplete Two-Dimensional Contingency Tables}, Biometrics \textbf{25} (1969), no. 1, 119--128.

\item K. Dayaratna and S. J. Miller, \emph{First Order Approximations of the Pythagorean Won-Loss Formula for Predicting MLB Teams Winning Percentages}, By The Numbers -- The Newsletter of the SABR Statistical Analysis Committee \textbf{22} (2012), no 1, 15--19.

\item C. N. B. Hammond, W. P. Johnson and S. J. Miller, \emph{The James Function}, 2013, preprint.

\item H. Hundel, \emph{Derivation of James' Pythagorean Formula}, 2003; see \hfill \\ \textcolor{blue}{\url{https://groups.google.com/forum/\#!topic/rec.puzzles/O-DmrUljHds}}.

\item B. James, \emph{1981 Baseball Abstract}, self-published, Lawrence, KS, 1981.

\item M. Jones and L. Tappin, \emph{The Pythagorean Theorem of Baseball and Alternative Models}, The UMAP Journal 26.2, 2005.

\item V. Miller and S. J. Miller, \emph{Relieving and Readjusting Pythagoras}, preprint 2014.

\item S. J. Miller, \emph{A derivation of the Pythagorean Won-Loss Formula in baseball}, Chance Magazine \textbf{20} (2007), no. 1, 40--48 (an abridged version appeared in The Newsletter of the SABR Statistical Analysis Committee \textbf{16} (February 2006), no. 1, 17--22, and an expanded version is online at \textcolor{blue}{\url{http://arxiv.org/pdf/math/0509698}}).

\item N. Silver, \emph{Is Alex Rodriguez Overpaid}, in Baseball Between the Numbers: Why Everything You Know About the Game Is Wrong, by The Baseball Prospectus Team of Experts, Basic Books, 2006.
    
\item Wikipedia, \emph{Pythagorean Expectation}, \hfill \\ \textcolor{blue}{\url{http://en.wikipedia.org/wiki/Pythagorean\underline{\ \ }expectation}}.

\item Wikipedia, \emph{Weibull}, \textcolor{blue}{\url{http://en.wikipedia.org/wiki/Weibull\underline{\ \ }distribution}}.

\end{enumerate}

\end{document}